\tikzset{main node/.style={circle,fill=blue!20,draw,minimum size=1cm,inner sep=0pt},
            }
\newcommand{\cl}{\operatorname{cl}}
\newcommand{\tttop}{{\scriptscriptstyle\top}}
\newcommand{\probSimplex}{\mathcal{D}_{n}}
\newcommand{\K}{\mathcal{K}}
\newcommand{\dkl}{\operatorname{D}_{\mathrm{KL}}}
\newcommand{\crm}[1]{\rho_{\mid {}#1}}
\title{\LARGE \bf
Risk-averse risk-constrained optimal control
}
\author{Pantelis~Sopasakis$^{\dagger}$, Mathijs Schuurmans$^{\ddag}$ and Panagiotis~Patrinos$^{\ddag}$%
\thanks{$^{\dagger}$P. Sopasakis is with Queen's University Belfast, School of Electronics, Electrical Engineering and Computer Science,
               Centre for Intelligent Autonomous Manufacturing Systems, BT9 5AH, Northern Ireland, UK.
               Email: \texttt{p.sopasakis@qub.ac.uk}}%
\thanks{$^{\ddag}$M. Schuurmans and P. Patrinos are with the Department 
	of Electrical Engineering (\textsc{esat-stadius}), KU Leuven, 
	Kasteelpark Arenberg 10, 3001 Leuven, Belgium.
        {\tt\small \{mathijs.schuurmans, panos.patrinos\}@esat.kuleuven.be}.}%
\thanks{This work is accompanied by an MIT-licensed open-source toolbox
        which is available at \url{https://github.com/kul-forbes/risk-averse}.}%
\thanks{The work of the second and third authors was supported by: FWO projects: No. G086318N; No. G086518N; 
        Fonds de la Recherche Scientifique -- FNRS, the Fonds Wetenschappelijk Onderzoek 
        -- Vlaanderen under EOS Project No. 30468160 (SeLMA), 
        Research Council KU Leuven C1 project No. C14/18/068 and the 
        Ford--KU Leuven Research Alliance project No. KUL0023.}% <-this % stops a space        
}%
\begin{document}

\maketitle
\thispagestyle{empty}
\pagestyle{empty}

\begin{abstract}
Multistage risk-averse optimal control problems with nested conditional risk 
mappings are gaining popularity in various application domains. Risk-averse
formulations interpolate between the classical expectation-based stochastic
and minimax optimal control. This way, risk-averse problems aim at hedging 
against extreme low-probability events without being overly conservative.
At the same time, risk-based constraints may be employed either as surrogates
for chance (probabilistic) constraints or as a robustification of 
expectation-based constraints.
Such multistage problems, however, have been identified as particularly hard to solve.
We propose a decomposition method for such nested problems
that allows us to solve them via efficient numerical optimization methods.
Alongside, we propose a new form of risk constraints which accounts for the 
propagation of uncertainty in time.
\end{abstract}

% \begin{IEEEkeywords}
% Multistage stochastic optimal control; 
% Risk measures;
% Risk-averse optimization;
% Risk constraints.
% \end{IEEEkeywords}

%%%%%%%%%%%%%%%%%%%%%%%%%%%%%%%%%%%%%%%%%%%%%%%%%%%%%%%%%%%%%%%%%%%%%%%%%%%%%%%%

% ---- Introduction
\section{Introduction}
\subsection{Background, motivation and contributions}

% -----------------------------------------------------------------------------
% - About risk measures in general (utility, motivation, ...)
% -----------------------------------------------------------------------------
Risk measures in stochastic optimal control serve two purposes: firstly, they
allow to account for inexact knowledge of the underlying probability 
distribution --- which in most cases is merely estimated --- and, secondly, 
offer a flexible framework which interpolates between worst-case and 
expectation-based (risk-neutral) formulations \cite{shapiro2014lectures,pflugBook}. 
Risk-averse optimal control aims at optimizing the expectation of a (random) 
cost function accounting for the worst-case probability distribution --- a general
approach which has been termed \textit{distributionally robust optimization}%
~\cite{Calafiore2006}.

% -----------------------------------------------------------------------------
% - About risk constraints
% -----------------------------------------------------------------------------
In several applications it is desirable to impose constraints on random 
quantities in a probabilistic fashion (typically in the form of probabilistic 
or expectation constraints), yet in doing so one should take into account the 
\textit{ambiguity} associated with the probability distribution%
~\cite{bertsimas-brown}. Risk constraints can be interpreted as ambiguous 
expectation constraints \cite{ben-tal-soft-robust,cccNemirovski} and are often 
employed as surrogates for chance constraints~\cite{nem2012,ChowNIPS} 
in order to avoid having to resort to computationally demanding methods such as 
integer programming~\cite{shen2014}. 

% -----------------------------------------------------------------------------
% - Why these problems are hard
% -----------------------------------------------------------------------------
Risk-averse optimal control formulations are nowadays making 
their way in applications such as 
power systems \cite{mgRisk2018}, and economics \cite{pflugBook} as their 
favorable properties are becoming evident.
Yet, their applicability is hindered by their complexity and computational cost
of associated multistage formulations. Multistage risk-averse optimal control 
problems amount to the optimization of a composition of several nonsmooth 
mappings \cite{shapiro2014lectures}. Typical numerical solution approaches, 
such as stochastic dual dynamic programming, fall short when faced with
large dimension of scenario-based problems \cite{Asamov2015,Collado2012ScenarioDO,Bruno2016979}.
When the involved risk measures are of the average value-at-risk type, we may 
obtain explicit solutions by multiparametric piecewise quadratic programming;
this is, however, limited to systems with with few states and small prediction 
horizons~\cite{patrinosECC2007}.

% -----------------------------------------------------------------------------
% - Our contributions!
% -----------------------------------------------------------------------------
The contributions of this paper are twofold: (i) we present a novel framework 
for risk constraints using nested risk measures which aim at accounting for 
the \textit{propagation of ambiguity}; we call this new type of risk constraints,
\textit{multistage nested risk constraints}, (ii) we propose a reformulation of 
multistage risk-averse problems involving nested risk measures which facilitates
their numerical solution.

While much of the research attention has focused on particular risk measures, such as 
the average value-at-risk~\cite{Rock00CVaROptim,ChowNIPS} and the mean upper 
semi-deviation~\cite{Collado_RASPD}, 
it has been unclear how to extend existing results to more general risk measures.
Overall, our approach makes use of the dual conic representation of risk 
measures and allows for the use of arbitrary coherent risk measures in contrast 
to existing approaches which focus on specific risk measures.
% When the risk measure is polytopic, the proposed approach does not make use of
% the extreme points of its ambiguity set, which is, more often than not,
% difficult to obtain.

\subsection{Notation}
Let \(\N_{[k_1,k_2]}\) denote the integers in \([k_1,k_2]\).
For \(z\in\Re^n\) let \(\plus{z}=\max\{0,z\}\), where the max is taken element-wise.
We denote the transpose of a matrix \(A\) by \(A^{\tttop}\).
The dual cone \(\K^*\) of a closed convex cone \(\K\subseteq\Re^n\) is the set 
\(\K^* = \{y\in\Re^n \mid y^{\top}x \geq 0, \forall x \in \K\}\). 
The relative interior of \(\K\) is denoted by \(\ri(\K)\).
A function \(f:\Re^n\to\Re\) is called \textit{lower semicontinuous} (lsc) if its
lower level sets, \(\{x {}\mid{} f(x)\leq \alpha\}\), are closed and it is called 
\textit{level bounded} if its lower level sets are bounded.

% ---- Measuring risk
%!TEX root = ../main.tex
\section{Measuring risk}
% ---- Introductory stuff (probability spaces, etc) ----
Let \(\Omega=\{\omega_i\}_{i=1}^{n}\) be a finite sample space equipped with the
discrete \(\sigma\)-algebra \(2^\Omega\) and a probability measure \(\prob\) with
\(\prob(\{\omega_i\})=\pi_i\). Without loss of generality, let us assume that 
\(\pi_i>0\). The pair \((\Omega, \prob)\) is called a \textit{probability space}.
A vector \(p\in\Re^{n}\) is called a \textit{probability vector} if \(p_i\geq 0\)
for all \(i\in\N_{[1,n]}\) and \(\sum_{i=1}^{n}p_i=1\). The set of all probability
vectors in \(\Re^{n}\) is called the \textit{probability simplex} and is denoted
by \(\probSimplex\). A real-valued \textit{random variable} over \((\Omega,\prob)\) 
is a mapping \(Z:\Omega\to\Re\) with \(Z(\omega_i)=Z_i\); this can be identified by 
the vector \(Z=(Z_1,\ldots, Z_n)\in\Re^n\).

Suppose that \(Z\) corresponds to a random cost. One possible way to extract
a characteristic index out of \(Z\) which quantifies its magnitude is to compute
its \textit{expectation}, which is
\[
      \E^{\pi}[{}Z{}] 
{}={} 
      \pi^{\tttop}Z.
\]
However, the expectation carries no deviation information and may fail to take 
into account extreme outcomes of the cost which might happen with low probability.
At the opposite end, the maximum of \(Z\) is defined as 
\[
      \max[{}Z{}] 
{}={} 
      \max_{i{}={}1,{}\ldots{}, n} Z_{i}.
\]
However, the maximum disregards the probability distribution and is likely to 
produce very conservative values. 

Risk measures are mappings \(\rho:\Re^{n}\to\Re\) which are used to derive a sure 
outcome which is no worse than \(Z\) itself. In other words, a risk measure extracts
a characteristic index taking into account the significance of high costs, which may 
happen with low probability. Trivially, the expectation and the maximum are risk 
measures.

A risk measure \(\rho:\Re^{n}\to\Re\) is said to be \textit{convex} if for all 
\(Z,Z'\in\Re^{n}\), \(c\in\Re\), \(\lambda \in [0,1]\) the following properties 
hold
% -----------------------------------------------------------------------------
% - Convexity axioms (Convex risk measures                                    -
% -----------------------------------------------------------------------------
\begin{enumerate}
 \item[A1.] \textit{Convexity}. 
	    \(\rho[\lambda Z + (1-\lambda) Z'] 
	      \leq \lambda \rho[Z] + (1-\lambda) \rho[Z']\),
 \item[A2.] \textit{Monotonicity}. 	    
	      \(\rho[Z]\leq \rho[Z']\), whenever \(Z_i\leq Z'_i\) for all 
	      \(i\in\N_{[1,n]}\),
 \item[A3.] \textit{Translation equivariance}. 
	    \(\rho[Z + c 1_n]= \rho[Z] + c\).
\end{enumerate}
% -----------------------------------------------------------------------------
% - Coherency axioms                                                          -
% -----------------------------------------------------------------------------
A convex risk measure is called \textit{coherent} if it satisfies the additional
axiom \cite[Def. 6.4]{shapiro2014lectures}
\begin{enumerate}
  \item[A4.] \textit{Positive homogeneity}. 
	      \(\rho[\alpha Z] = \alpha \rho[Z]\) for all \(\alpha\geq 0\).
\end{enumerate}
Coherent risk measures are considered well behaving and the coherency axioms 
are heavily exploited in risk-averse optimization formulations.
% -----------------------------------------------------------------------------
% - Strong monotonicity                                                       -
% -----------------------------------------------------------------------------
Certain risk measures may satisfy a stronger monotonicity assumption \cite{javtaft17,shapich16,shapiro2014lectures}
\begin{enumerate}
 \item[A5.] \textit{Strict/strong monotonicity}. The risk measure \(\rho\) is called strictly (strongly) monotone 
 if \(\rho[Z] < \rho[Z']\) whenever \(Z \leq Z'\), \(\prob[Z < Z'] > 0\) (and \(\max Z < \max Z'\)).
\end{enumerate}
A monotone risk measure \(\rho\) can be regularized to produce a strictly monotone
risk measure by defining
\[
 \rho^\lambda[Z] =  (1-\lambda) \rho[Z] + \lambda \E^{\pi}[Z],
\]
for \(\lambda \in (0,1]\). Additionally, \(\rho^\lambda\) preserves the coherency of \(\rho\).

% -----------------------------------------------------------------------------
% - Fundamental duality result for coherent risk measures                     -
% -----------------------------------------------------------------------------
An important duality result is that all coherent risk measures can be written as 
\cite[Thm. 6.5]{shapiro2014lectures}
\begin{equation}\label{eq:rho_basic_representation}
      \rho[Z]
{}={} 
      \max_{\mu \in \Af(\pi)} \E^\mu[Z],  
\end{equation}
where  \(\Af(\pi) {}\subseteq{} \probSimplex\) is a closed and convex set of probability 
vectors which contains \(\pi\). 
We call \(\Af(\pi)\) the \textit{ambiguity set} of \(\rho\).
Equation \eqref{eq:rho_basic_representation} offers 
an interpretation of coherent risk measures: a coherent risk measure is the worst-case
expectation under inexact knowledge of the underlying probability vector \(\mu\).
For example, \(\max[Z] = \max_{\mu\in\probSimplex}\E^{\mu}[Z]\), that is, the maximum
operator reflects the total lack of probabilistic information.

% -----------------------------------------------------------------------------
% - The average value-at-risk                                                 -
% -----------------------------------------------------------------------------
A popular risk measure is the 
\textit{average value-at-risk} with parameter \(\alpha\in[0,1]\), which is 
defined as 
\begin{equation} 
 \label{eq:avar-definition}
      \AVAR_\alpha[Z]
{}={} 
      \begin{cases}
    \min\limits_{t\in\Re} 
        t+\nicefrac{1}{\alpha}\E^{\pi}\plus{Z-t},&\alpha\neq 0\\
    \max[Z],                    & \alpha=0
      \end{cases}
\end{equation}
The ambiguity set of \(\AVARa\) is
\begin{equation}
	\label{eq:ambiguity-set-avar}
 \Af^{\mathrm{avar}}_{\alpha}(\pi) {}={} \left\{
	\mu\in\Re^n 
	  \left| 
	    \textstyle\sum_{i=1}^{n} \mu_i =1, 
		0 \leq \mu_i \leq \frac{\pi_i}{\alpha} 
	  \right.
	\right\}.
\end{equation}
The average value-at-risk is a coherent, non-strongly monotone (except for \(\alpha=1\)), 
risk measure.
For \(\alpha=1\), \(\Af^{\mathrm{avar}}_1(\pi)=\{\pi\}\) and 
\(\AVAR_{1}[Z](\pi)=\E[Z]\). The maximal ambiguity set is attained for \(\alpha=0\),
i.e., \(\Af^{\mathrm{avar}}_0 {}={} \probSimplex\).
Therefore, \(\AVARa\) interpolates between 
the risk-neutral expectation operator (\(\AVAR_1\)) and the worst-case
maximum (\(\AVAR_0\)).
% ---- Figure: Admissibility set of AV@R
\begin{figure}[t]
\centering
 \includegraphics[width=0.95\columnwidth]{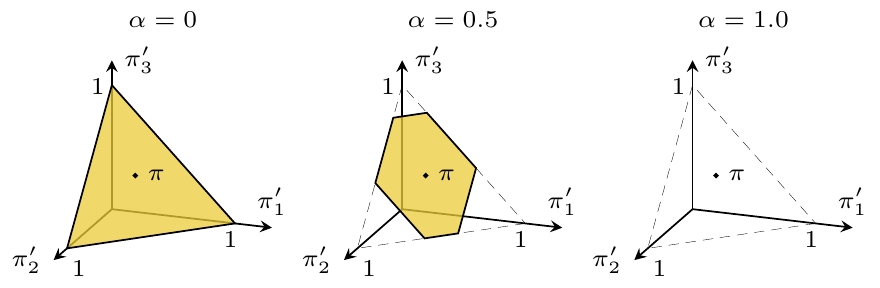}
 \caption{%
      Set \(\Af^{\mathrm{avar}}_{\alpha}(\pi)\) for different 
      values of \(\alpha\) on a space \(\Omega=
      \{\omega_1, \omega_2, \omega_3\}\) with \(\pi_1=0.2\), 
      \(\pi_2=0.3\) and \(\pi_3=0.5\). (The authors are thankful to Christian Hans, who helped
      with the Tikz code for this figure.)% 
      }%
 \label{fig:avar-sets}
\end{figure}

Risk measures with polytopic ambiguity sets can be fully described by the 
set of vertices of their ambiguity sets, that is \(\Af=\conv\{\mu^{(l)}\}_{l=1}^{\kappa}\)
and \(\rho[Z] {}={} \max_{l\in\N_{[1,\kappa]}}\E^{\mu^{(l)}} [Z]\). However, the 
computation of these extreme points is a computationally demanding operation%
    \footnote{Let
              \(\Omega=\{\omega_i\}_{i=1}^{n}\) with \(\pi_i=\nicefrac{1}{n}\).
              For \(n=10\), the minimal representation of \(\mathcal{A}(\pi)\) counts \(252\) 
              vertices. For \(n=15\), the number of vertices increases to \(51480\). 
              For \(n{}={}13\), the determination of the minumum number of 
              vertices by the MPT toolbox (using Gurobi), 
              requires \(3.7\) hours.}.

% \begin{figure}
%  \centering
%  \includegraphics[width=0.9\linewidth]{figures/avarComplexity2}
%  \caption{Let \(\Omega=\{\omega_i\}_{i=1}^{n}\) and \(\pi_i=\nicefrac{1}{n}\). 
%  This plot shows the number of vertices of \(\Af_{0.5}^{\mathrm{avar}}(\pi)\) (solid blue line) 
%  and the time needed to compute them (dashed red line) against the dimension of the space, \(n\).
%  Polytopic computations were carried out using Multi-Parametric Toolbox 3.0~\cite{MPT3}.}
%  \label{fig:vertex_computation}
% \end{figure}

% -----------------------------------------------------------------------------
% - Entropic value-at-risk (strongly monotone)                                -
% -----------------------------------------------------------------------------
Another popular coherent and strongly monotone risk measure is the 
\textit{entropic value-at-risk} at level \(\alpha\in(0,1]\), denoted by \(\EVARa\)~\cite{EVAR}, whose 
ambiguity set is given by
\begin{equation}
	\Af^{\mathrm{evar}}_{\alpha}(\pi)
{}={} \left\{ 
	  \mu \in \probSimplex 
      {}\mid{} 
		\dkl(\mu {}\|{} \pi) 
	  {}\leq{} 
		-\ln \alpha
      \right\},
\end{equation}
where 
\[
      \dkl(\mu \| \pi)
{}\dfn{} 
      \sum_{i=1}^{n} \mu_i \ln \frac{\mu_i}{\pi_i},
\] 
is the Kullback-Leibler divergence from \(\pi\) to \(\mu\).
We have \(\EVAR_{1}[Z]=\E[Z]\) and \(\lim_{\alpha \downarrow 0}\EVARa[Z] = \max[Z]\).
The entropic value-at-risk \(\EVARa\) is strongly monotone and, additionally, strictly monotone
over the space of random variables with \(\prob[\{\omega {}\mid{} Z(\omega) = \max[Z]\}] < 1-\alpha\) \cite{javtaft17}.
%

% ---- Stochastic systems and multistage risk
% :: introduce scenario tree and nested risk mappings
% !TEX root=../main.tex
\section{Stochastic systems and multistage risk}

% -----------------------------------------------------------------------------
% - System dynamics and scenario trees                                        -
% - Time evolution of the state on the scenario tree                          -
% -----------------------------------------------------------------------------
\subsection{System dynamics and scenario trees}
Consider the following discrete-time dynamical system
\begin{equation}
      \label{eq:system-dynamics}
      x_{t+1}
{}={}
      f(x_{t}, u_{t}, w_{t}),
\end{equation}
with state variable \(x_{t}\in\Re^{n_x}\), input \(u_{t}\in\Re^{n_u}\) and a disturbance
\(w_t\in\Re^{n_w}\) which is a random process. We will study the evolution of this
system throughout a finite horizon of \(N\) future time instants referred to as 
\textit{stages}. Starting from a known initial state \(x_{0}\) the system states 
evolve according to \eqref{eq:system-dynamics} as illustrated in Figure 
\ref{fig:tree}(a) giving rise to a structure known as a \textit{scenario tree}.
There have been proposed several methodologies to generate scenario trees from 
data \cite{Pflug2015scen,HeiRom09}.

The \textit{nodes} of the tree are assigned a unique index \(i\)
with \(i=0\) being the \textit{root node} which corresponds to the initial state 
\(x_0\).  The nodes at stage \(t\in\N_{[0,N]}\) are denoted by \(\nodes(t)\). 
Starting from the root node, a node \(i\) is visited with probability 
\(\pi^{i}>0\) (and \(\pi^{0}=1\)); this makes \(\nodes(t)\) a probability 
space with probability vector \(\pi_{t}=(\pi^{i})_{i\in\nodes(t)}\).

The unique \textit{ancestor} of a node \(i\in\nodes(t)\setminus \{0\}\) is denoted 
by \(\anc(i)\) and the set of \textit{children} of \(i\in\nodes(t)\) for 
\(t\in\N_{[0,N-1]}\) is \(\child(i)\subseteq\nodes(t+1)\); this becomes a probability 
space with probability vector \(\pi^{[i]}=\frac{1}{\pi^{i}}(\pi^{i_+})_{i_+\in\child(i)}\).

As shown in Fig.~\ref{fig:tree}(a), every node \(i\) of the tree is 
associated with a state value \(x^i\) and all non-leaf nodes \(i\) are assigned an input 
\(u^i\). 
Every edge connecting \(i\) with \(i_+{}\in{}\child(i)\) is associated with a disturbance \(w^{i_+}\).
The finite-horizon evolution of \eqref{eq:system-dynamics} on the 
scenario tree is described by
\begin{equation}
 \label{eq:system-dynamics-tree}
      x^{i_{+}} 
{}={}
      f(x^{i}, u^{i}, w^{i_{+}})
\end{equation}
for all \(i\in\nodes(t)\), \(t\in\N_{[0,N-1]}\) and \(i_{+}\in\child(i)\).
Note that having assigned a control action \(u^{i}\) means that decisions
are made in a causal fashion, i.e., control actions \(u_{t}\) are only allowed to 
depend on information that is available up to time \(t\).
The nodes of the tree at stage \(N\) are called \textit{leaf nodes}.
% Every leaf 
% node \(i\in\nodes(N)\) identifies a \textit{scenario}, that is, a sequence 
% \(\scenario(i) {}={} \{0,\ldots,\anc(\anc(i)), \anc(i), i\}\).

% -----------------------------------------------------------------------------
% - Measuring risk on scenario trees                                          -
% - Conditional risk mappings                                                 -
% - Nested risk measures and dual representations                             -
% -----------------------------------------------------------------------------
\subsection{Measuring risk on scenario trees}%
\label{sec:measuring-risk-on-scenario-trees}
\vspace{0em}
In this section we introduce the notion of conditional risk mappings which is 
essential in measuring the risk of a random cost which evolves in time 
across the nodes of a scenario tree \cite[Sec. 6.8.1]{shapiro2014lectures}.

% \begin{figure}[!tb]
%  \centering
%  \includegraphics[width=0.99\linewidth]{figures/tree2}
%  \caption{(a) Structure of a scenario tree showing the evolution of the state
%  of the uncertain system \eqref{eq:system-dynamics}; (b) definition of a 
%  conditional risk mapping at stage \(t=1\) on a scenario tree. Note that, for example, 
%  \(Z^4 = \ell(x^1, u^1, w^4)\) and
%  \(Z_1 = (Z^3,Z^4,\ldots, Z^7)\). The conditional risk \(\crm{1}\) is a mapping 
%  \(\crm{1}:\Re^{|\nodes(2)|}\to\Re^{\nodes(1)}\), 
%  that is \(\crm{1}:\Re^{5}\to\Re^{2}\).}%
%  \label{fig:tree}
% \end{figure}
%

\begin{figure}[!tb]
 \centering
 \includegraphics[width=0.99\linewidth]{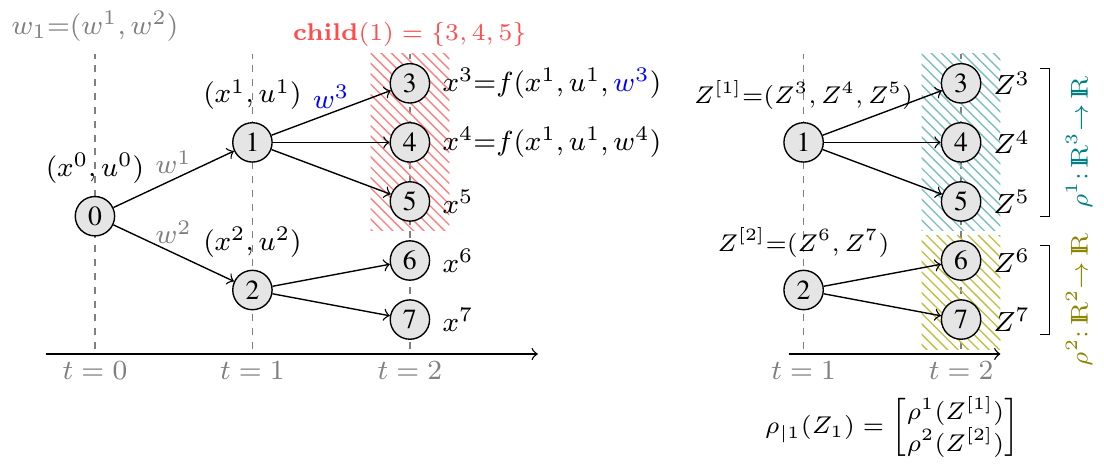}%
 \caption{(a) Structure of a scenario tree showing the evolution of the state
 of the uncertain system \eqref{eq:system-dynamics}; (b) definition of a 
 conditional risk mapping at stage \(t=1\) on a scenario tree. Note that, for example, 
 \(Z^4 = \ell_1(x^1, u^1, w^4)\) and
 \(Z_1 = (Z^3,Z^4,\ldots, Z^7)\). The conditional risk \(\crm{1}\) is a mapping 
 \(\crm{1}:\Re^{|\nodes(2)|}\to\Re^{\nodes(1)}\), 
 that is \(\crm{1}:\Re^{5}\to\Re^{2}\).}
 \label{fig:tree}
\end{figure}

% \begin{figure*}[t]
% \tikzexternaldisable
%   \centering
%   \includetikz{qwScenarioTree}
%   \caption{?.}
%   \label{fig:tree_tikz_new}
% \tikzexternalenable
% \end{figure*}

For \(t\in\N_{[0,N-1]}\), let \(\ell_{t}:\Re^{n_x}\times\Re^{n_u}\times\Re^{n_w}
\to\Re\) be a stage cost function and \(\ell_N:\Re^{n_x}\to\Re\) be 
the terminal cost function. 
Such cost functions will be used in the following section to construct optimal 
control problems.

% ---Define the random variable Z_t (stage cost)-------------------------------
Every node \(i\in\nodes(t+1)\), \(t\in\N_{[0,N-1]}\) is associated with a 
cost value \(Z^{i} {}={} \ell_{t}(x^{\anc(i)}, u^{\anc(i)}, w^{i})\). For each
\(t\in\N_{[0,N-1]}\) we define a random variable 
\(Z_t = (Z^i)_{i\in\nodes(t+1)}\) on the probability space \(\nodes(t+1)\). 
For example, the cost at \(t=0\) is the random variable 
\(Z_0 = (Z^i)_{i\in\nodes(1)} = (\ell_0(x^0, u^0, w^{i}))_{i\in\nodes(1)}\).
% ---Define the terminal cost Z_N
At stage \(N\) the terminal cost is the random variable \(Z_N = 
(\ell_N(x^i))_{i\in\nodes(N)}\).

By defining \(Z^{[i]} {}\dfn{} (Z^{i_+})_{i_+\in\child(i)}\), \(i\in\nodes(t)\),
we partition the variable \(Z_{t} = (Z^{[i]})_{i\in\nodes(t)}\) into groups of 
nodes which share a common ancestor as shown in Fig.~\ref{fig:tree}(b).

Let \(\rho^{i}:\Re^{|\child(i)|}{\to}\Re\) be risk measures on the probability space \(\child(i)\).
For every stage \(t\in\N_{[0,N-1]}\) we may define a \textit{conditional risk mapping} 
at stage \(t\), \(\crm{t}:\Re^{|\nodes(t+1)|}\to\Re^{|\nodes(t)|}\),
as follows
\begin{equation}
\label{eq:crm-definition}
	\crm{t}[{}Z_{t}{}] 
{}={} 
	(\rho^{i}[{}Z^{[i]}{}])_{i\in\nodes(t)}.
\end{equation}
This construction is illustrated in Fig.~\ref{fig:tree}(b).

Conditional risk mappings admit a dual representation akin to that in 
Eq. \eqref{eq:rho_basic_representation}. Provided that all \(\rho^{i}\) are coherent risk
measures, \eqref{eq:crm-definition} yields 
\begin{equation}
\label{eq:crm-dual}
	\crm{t}[Z_{t}] 
{}={}
	\Big(\max_{\mu^{i}\in\Af^{i}(\pi^{[i]})} \E^{\mu^{i}} [Z^{[i]}] \Big)_{i\in\nodes(t)},
\end{equation}
where \(\Af^{i}\) is the ambiguity set of \(\rho^{i}\).
Conditional risk mappings are used to measure the risk of a multistage stochastic
process 
\(
\left(Z_{0}, \ldots, Z_{t} \right)
\) 
of random costs, which evolves on a scenario tree.  
Given a sequence 
\(
(\crm{0},\ldots,\crm{t})
\)
of conditional risk mappings, we define
\begin{equation*}
	  \varrho_{t}(Z_{0},\dots,Z_{t}) 
{}={} 
	  \crm{0} \big[ 		    
		    Z_0 {}+{} \crm{1}[{}\cdots{} 
		      {}+{} \crm{t} [{}Z_{t}{}]{}]
		  \big]
\end{equation*}
which is called a \emph{nested multistage risk} measure. 
We define the \textit{composite risk measure} at stage \(t\) as 
\begin{equation}
  \label{eq:composite_risk_measure} 
 \bar{\rho}_{t}[{}Z_{t}{}] =  \varrho_{k}(0,\dots,0,Z_{t}).
\end{equation}
If all \(\rho^{i}\) are coherent risk measures, then \(\bar{\rho}_{t}\) is a coherent 
risk measure on \(\nodes(t)\) \cite{shapiro2014lectures}.
% The composite risk measure \(\bar{\rho}_{t}[{}Z_{t}{}]\) is the worst case expectation 
% of \(Z_t\) accounting for the ambiguity at all intermediate stages,
% which is represented by \(\mathcal{A}(\pi^{[i]})\), for \(i\in\nodes(t')\), \(t'\in\N_{[0,t]}\).
% {\color{red}Composite risk measures have the following dual representation following 
% \eqref{eq:crm-dual}
% %
% %
% \begin{equation}
% \label{eq:composite_risk_measure:dual}
% 	\bar{\rho}_{t}[{}Z_{t}{}] 
% {}={}
% 	\max_{(\mu_{0},\dots,\mu_{t}) {}\in{} \bar{\Af}_{t}} 
% 	    \E^{\mu_0}\cdots\E^{\mu_{t}}[{}Z_{t}{}]
% \end{equation}
% where 
% \(
% 	\bar{\Af}_{t} 
% {}\dfn{}
% 	\{
% 		(\mu_{j})_{j\in\N_{[0,t]}} 
% 	  {}\mid{}
% 		\mu_{0} \in \Af_{0}(\pi_1),
% 		\mu_{j} \in \Af_{j}( \mu_{j-1} ), 
% 		j\in\N_{[1,t]}
% 	\}
% \).
% Equation \eqref{eq:composite_risk_measure:dual} offers an interpretation of 
% composite and multistage nested risk measures. The risk \(\bar{\rho}_{t}\) is the
% worst-case expectation accounting for ambiguity in the probabilities at all 
% stages up to \(t\) and how this propagates in time.
% }

% ---- Risk-constrained risk-averse optimal control
% :: describe the problem
\section{Risk-constrained risk-averse optimal control}
\subsection{Risk-averse optimal control problems}
% -----------------------------------------------------------------------------
A risk-constrained risk-averse optimal control problem with horizon \(N\) is 
defined via the following multistage nested formulation
\cite[Sec.~6.8.1]{shapiro2014lectures}%
\begin{subequations}\label{eq:risk-problem}
\begin{align}
 V^\star {}={}& \inf_{u_0}\crm{0}\bigg[\ell_0(x_0, u_0, w_0)
	+ \inf_{u_1}\crm{1}\Big[ \ell_1(x_1, u_1, w_1)\notag\\
	{}+{}&\ldots + \inf_{u_{N-1}} \crm{N-1}\big[\ell_{N-1}(x_{N-1}, u_{N-1}, w_{N-1}) \notag\\
	{}+{}& \ell_{N}(x_N)\big] \cdots
   \Big]
 \bigg]
\end{align}
subject to
\begin{align}
x_{t+1} {}={}& f(x_t, u_t, w_t),\label{eq:dynamics-constraints}\\
r_t[\phi_{j,t}(x_t, u_t, w_t)] {}\leq{}& 0, j\in\N_{[1,q_t]}, \label{eq:risk-constraints-stage}\\
r_N[\phi_{j,N}(x_N)] {}\leq{}& 0, j\in\N_{[1,q_N]}\label{eq:risk-constraints-terminal}
\end{align}
\end{subequations}
for all \(t\in \N_{[0,N-1]}\).
Constraints \eqref{eq:risk-constraints-stage} are risk constraints involving
risk measures \(r_t\) on the probability spaces \(\nodes(t+1)\) and \(r_N\) is 
a risk measure on \(\nodes(N)\). Their role is discussed in Section
\ref{sec:risk-constraints}.
The infima in \eqref{eq:risk-problem} are taken with respect to causal control functions \(u_t\).

The above nested formulation amounts to minimizing the nested
multistage cost \(\varrho_{N-1}(\ell_0(x_0, u_0, w_0),\ldots, \ell_N(x_N))\) subject to the 
system dynamics and additional constraints \cite[Sec. 6.8]{shapiro2014lectures}.
% The nested formulation in \eqref{eq:risk-problem} aims at minimizing the 
% multistage risk taking into account the propagation of ambiguity in the 
% underlying probability distribution \cite{shapiro2012,ruszczynski2010}. 
Replacing the conditional risk mappings, \(\crm{t}\), with 
conditional expectations, \(\E_{\mid{}t}\), results in
a standard expectation-based problem \cite{bertsekas-book-vol1}.
Similarly, when the underlying risks are the maximum operators, 
we obtain a minimax problem \cite{bertsekas-book-vol1}.
Therefore, risk-averse problems 
generalize risk-neutral and minimax formulations and contain them
as special cases.
Moreover, the above formulation enables the stability analysis 
of associated model predictive control formulations% 
~\cite{chow-risk-averseMPC,HeSoPaBe_RAMPC}.
%
%
% \footnote{Explain how to deal with \textbf{joint} chance constraints --- map to appropriate 
% \(\phi_t\). For example: for \(H x + L u \succcurlyeq_{\K} h\) we may use 
% \(\phi(x,u)=\dist_{\K}(H x + L u - h)\) (let's do that in the example only)\cite{cccNemirovski}.}

\subsection{Risk constraints}\label{sec:risk-constraints}
% -----------------------------------------------------------------------------
% - Introduction to risk constraints                                          -
% -----------------------------------------------------------------------------
At each stage \(t\in\N_{[0,N-1]}\), let us define \(q_t\) functions 
\(\phi_{j,t}:\Re^{n_x}\times\Re^{n_u}\times\Re^{n_w}\to\Re\), \(j\in\N_{[1,q_t]}\).
At stage \(N\), we also define \(q_N\) functions \(\phi_{j,N}:\Re^{n_x}\to\Re\),
\(j\in\N_{[1,q_N]}\).
Reciting \cite{risk-quadrangle}, our objective is to impose that 
``\(\phi_{j,t}\) are adequately \(\leq{} 0\),'' for \(t\in\N_{[0,N]}\), in a 
probabilistic sense.

Let \(G_{j,t} = \phi_{j,t}(x_t, u_t, w_t)\) be a real-valued random quantity 
defined at stage \(t\in\N_{[0,N-1]}\) and \(G_{j,N} = \phi_{j,N}(x_N)\).
Similar to the definition of \(Z^i\) in Sec.~\ref{sec:measuring-risk-on-scenario-trees},
at every stage \(t\in\N_{[0,N-1]}\) and node \(i\in\nodes(t+1)\), we assign values
\(G_{j,t} = ((G_{j}^{i})_{i\in\nodes(t+1)}\) for every \(j\in\N_{[1,q_t]}\). 
Analogously, we define \(G_{j,N}\) for \(j\in\N_{[1,q_N]}\)

% The problem is also subject to risk constraints of the form \eqref{eq:risk-problem:rc}.
Risk constraints may serve several purposes:
(i) the average (and the entropic) value-at-risk can be used as a convex approximation of 
chance constraints~\cite{Rock00CVaROptim}. Chance constraints of the 
form \(\prob[G_{j,t} {}\leq{} 0] \geq 1-\delta\) can be approximated
by risk constraints of the form \(\AVAR_{\delta}[G_{j,t}] \leq 0\)
(or \(\EVAR_{\delta}[G_{j,t}] \leq 0\)) --- in particular, \(\AVAR\) 
offers a tight convex approximation to chance constraints 
\cite[Sec. 4.3.3]{ben-tal-robust-book}, 
(ii) to impose ambiguous expectation constraints,
that is, constraints of the form \(\E^{\mu}[G_{j,t}] \leq 0\) 
for all \(\mu\) in a set \(\mathcal{M}\) \cite{ben-tal-soft-robust}, and lastly, 
(iii) to accommodate ambiguity in chance constraints, i.e., 
\(\prob[G_{j,t} {}\leq{} 0] \geq 1-\delta\) for all \(\prob\in\mathcal{M}\)% 
~\cite{shapiro2014lectures}.
Here, we study two different risk constraint formulations on scenario
trees, namely, 
(i) stage-wise risk constraints, 
(ii) multistage nested risk constraints.

% -----------------------------------------------------------------------------
% - Stage-wise risk contraints                                                -
% -----------------------------------------------------------------------------
\subsubsection{Stage-wise risk constraints}

Stage-wise constraints are imposed at every stage \(t\in\N_{[0, N-1]}\)
as follows
\begin{equation}\label{eq:stage-wise-risk-constraints}
	r_{t} [{}G_{j,t}{}]
{}\leq{}
	0,
\end{equation}
for \(j\in\N_{[1,q_t]}\), where \(r_{t}:\Re^{|\nodes(t+1)|}\to\Re\) are 
risk measures and \(G_{j,t}\in\Re^{|\nodes(t+1)|}\).
At \(t=N\), similarly, we impose \(r_{N-1}[G_{j,N}]\leq 0\) for \(j\in\N_{[1,q_N]}\).
But, such risk-based constraints do not account for how the probability
distribution at stage \(t\) is generated in time; indeed, the dependence on
previous stages in \eqref{eq:stage-wise-risk-constraints} is disregarded. This 
can lead to certain pathological cases as we demonstrate in the following section.

% -----------------------------------------------------------------------------
% - Multistage risk contraints                                                -
% - Motivation (example)                                                      -
% -----------------------------------------------------------------------------
\subsubsection{Multistage nested risk constraints}\label{sec:multistage_risk_constraints}
\begin{figure}%
 \centering%
 \hspace*{1.8em}
 \includegraphics[width=0.95\linewidth]{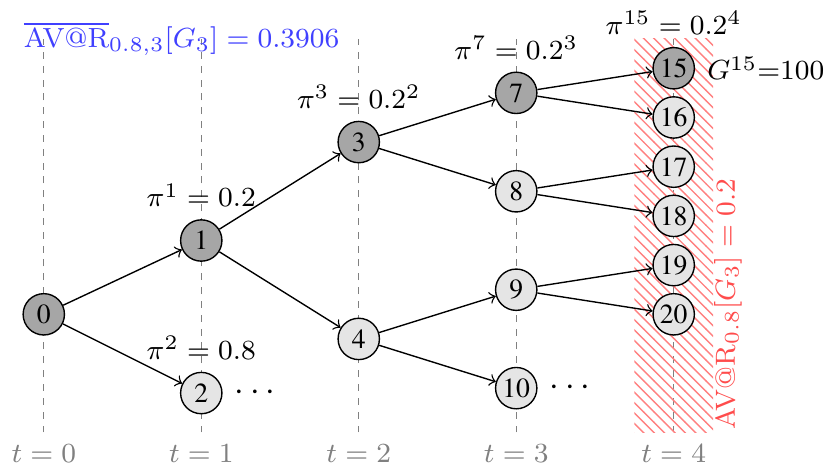}
 \caption{Motivation for the use of multistage nested risk constraints. As explained in 
 Section~\ref{sec:multistage_risk_constraints}, \(G_3=(G^{15}, G^{16}, \ldots) = (100, 0, \ldots, 0)\).}%
 \label{fig:motivation-nested}%
\end{figure}%
Consider a scenario tree generated by an iid process \((w_t)_{t\in\N_{[0,N-1]}}\) 
with \(w_t\in\{0,1\}\) as the one in Fig. \ref{fig:motivation-nested} with 
\(\prob[w_t = 0] = 0.2\) and \(\prob[w_t=1]=0.8\). 
Let functions \((\phi_t)_{t\in\N_{[0,N]}}\) be such 
that \(G^i=0\) for \(i\neq 15\) and \(G^{15}=10^2\).
The (nominal) probability of node \(i=15\) is 
\(\pi^{15} = 0.2^4 = 0.0016\). Suppose that the probability has 
been misestimated and the actual one is \(\prob'[w_t = 0] = 0.25\) and 
\(\prob'[w_t = 1] = 0.75\). This probability vector is within 
\(\Af^{\mathrm{avar}}_{0.8}(\prob)\). On the other hand, we have that \(\pi^{\prime{}15}
=0.25^4 = 0.0039\), but the ambiguity set \(\Af^{\mathrm{avar}}_{0.8}\) on \(\nodes(4)\)
contains no \(\mu\) such that \(\mu[\{15\}]=0.0039\).
In other words, the stage-wise risk fails to describe how 
ambiguity may build up and propagate in time.
This motivates the introduction of \textit{multistage nested risk constraints}
of the form
\begin{equation}\label{eq:nested-risk-constraint}
	\bar{r}_t[{}G_{j,t}{}]
{}={}	
	r_{\mid {}0}
	  \big[ r_{\mid {}1} 
	    \big[
	      \cdots
		{}r_{\mid{}t}[{}G_{j,t}{}] 
	    \big] 
	  \big]
{}\leq{}
	0
\end{equation}
Here, using the average value-at-risk with parameter \(\alpha=0.8\), 
we compute the risk \(\AVAR_{0.8}[G_3]\,{=}\,0.2\); the corresponding nested risk is 
\(\overline{\AVAR}_{0.8,3}[G_3] \,{=}\, 0.3906\).
Note that nested risk constraints neither imply nor are implied by stage-wise ones.

% ---- Tractable reformulations
%!TEX root=../main.tex
\section{Tractable reformulations}

\subsection{Conic representation of risk measures}
\label{sec:conic-risk-measures}
The ambiguity set of a coherent risk measure can be written using conic inequalities, i.e., there exist matrices $E, F$ and a vector $b$, such that   
\begin{equation}
\label{eq:conic-risk}
      \rho[Z]
{}={}
      \max_{\mu\in\Re^{n}, \nu\in\Re^{r}}
      \{
	\mu^{\tttop}Z
	{}\mid{}
	E\mu + F\nu \preccurlyeq_{\K} b
      \},
\end{equation}
where \(\K\) is a closed, convex cone and \(\nu\) is an auxiliary variable. All widely 
used coherent risk measures can be written in this form.
Tacitly, we have assumed that all admissible \(\mu\) in \eqref{eq:conic-risk} are
probability vectors and the ambiguity set of \(\rho\) is the following subset of \(\probSimplex\)
\[
      \mathcal{A}
{}={} 
      \{
	      \mu\in\Re^n 
	  {}\mid{} 
	      \exists \nu \in \Re^{r}
	  {}:{}
	      E \mu + F \nu \preccurlyeq_{\K} b 
      \}.
\]
For example, \(\AVARa\) is written as in \eqref{eq:conic-risk} with
\(r=0\) and \(E = \smallmat{I & -I & 1_n}^{\tttop}\),
\(b=\smallmat{\nicefrac{1}{\alpha}\pi^{\tttop}&0_{}&1}^{\tttop}\) and 
\(\K=\Re_{\geq 0}^{2n}\times \{0\}\). 
\(\EVARa\) can also be written in the above form. 
Let 
\(
\K^{\mathrm{e}} = 
 \cl\{(x,y,z)\in\Re^3 {}\mid{} y e^{x/y} \leq z, y>0\}
\)
be the exponential cone.
By virtue of the equivalence 
\(
x\ln(x/y)\leq t \Leftrightarrow (-t,x,y)\in \K^{\mathrm{e}}
\),
the ambiguity set \(\Af^{\mathrm{evar}}_{\alpha}(\pi)\) is
\begin{align*}
	\Af^{\mathrm{evar}}_{\alpha}(\pi)
{}={}
	\left\{ 
	  \mu\in\probSimplex 
	  \left| 
	  \hspace{-0.35em}
	  {
	  \begin{array}{l}
		    \exists \nu\in\Re^{n}: \sum_{i=1}^{n} \nu_{i} 
	      {}\leq{} 
		    -\ln \alpha, 
	      \\
		    (-\nu_{i}, \mu_{i}, \pi_{i})
	      {}\in{} 
		    \K^{\mathrm{exp}}, i\in\N_{[1,n]}
	  \end{array}
	  }
	  \right.
	  \hspace{-0.5em}
	\right\}{\!,}
\end{align*}
Lastly, note that if \(\rho\) is admits a conic representation, so does the regularized 
risk measure \(\rho^\lambda\), which has the ambiguity set 
\begin{equation*}
    \mathcal{A}^\lambda
{}={}
    \left\{ 
      \mu\in\probSimplex
      \left|
      {
      \begin{array}{l}
      \exists \nu \in \Re^{r+n} \text{ such that:}\\
      \smallmat{0\\I}
      \mu
      +
      \smallmat{E & F\\ (\lambda-1)I & 0}\nu 
      \preccurlyeq_{\K\times\{0_{n}\}}
      \smallmat{b\\\lambda \pi}
      \end{array}}
      \right.
    \right\}.
\end{equation*}

Provided that strong duality holds --- which is the case if there exist \(\mu^*\) and \(\nu^*\) 
so that \(b - E\mu^* + F\nu^* \in \ri(\K)\) \cite[Thm.~1.4.2]{BenTal2001Book} 
--- the risk measure in \eqref{eq:conic-risk} can be 
written as
\begin{equation}\label{eq:conic-risk-dual}
 \rho[Z] = \min_{y}\,
      \{
	y^{\tttop}b 
      {}\mid{}
	E^{\tttop}y = Z, F^{\tttop}y = 0, y \succcurlyeq_{\K^*} 0
      \}.
\end{equation}
We shall use this representation of risk measures to derive computationally tractable
reformulations of optimal control problems involving risks.

\subsection{Decomposition of nested formulation}

% -----------------------------------------------------------------------------
% - Interchangeability principle                                              -
% -----------------------------------------------------------------------------
In this section we propose a computationally tractable reformulation of problem%
~\eqref{eq:risk-problem} using the following result
\begin{thm}[Risk-infimum interchangeability]\label{thm:interchangeability}
Let \(\rho : \Re^n \rightarrow \Re\) be a convex risk measure
and \(g:\Re^m \ni x \mapsto (g_1(x), \ldots, g_n(x)) \in \Re^n\) where 
\(g_i:\Re^m \rightarrow \Re\) is an lsc, 
level-bounded function over a closed set \(\emptyset \neq X \subseteq \Re^m\). 
Let \( \inf_{x\in X} g(x) {}\dfn{} \left(\inf_{x\in X} g_1(x), \dots, 
\inf_{x\in X} g_n(x)\right)\). Then 
\begin{subequations}
\begin{align} 
  \rho 	\Big[
	  \inf_{x\in X} g(x) 
	\Big] 
{}={} 
  \inf_{x\in X} 
      \rho 
	  [{}g(x){}]
  \label{eq:interchangeability_infima} 
\\
  \argmin_{x\in X} g(x) 
{}\subseteq{} 
  \argmin_{x\in X} 
      \rho 
	  [{}g(x){}]. 
  \label{eq:argmin_subset}
\end{align}
\end{subequations}
Furthermore, if \(\rho\) is \emph{strictly monotone} or \(\rho\circ g:\Re^m\to\Re\) 
is strictly convex over \(X\), then
\begin{align}
	\argmin_{x\in X} g(x) 
{}={}
	\argmin_{x\in X} \rho [{}g(x){}]. 
\label{eq:argmin_equality} 
\end{align}
\begin{proof}
The proof is given in the appendix.
\end{proof}
\end{thm}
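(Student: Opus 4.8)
\emph{Proof strategy.} I would reduce all three claims to a single structural fact: that the $n$ scalar minimizations $\inf_{x\in X}g_i(x)$ admit a \emph{common} minimizer $x^\star\in X$. Since $X\neq\emptyset$ is closed and each $g_i$ is lsc and level-bounded over $X$, every sublevel set $\{x\in X\mid g_i(x)\le\alpha\}$ is compact, so $g_i^\star\dfn\inf_{x\in X}g_i(x)$ is finite and attained; I write $g^\star\dfn(g_1^\star,\dots,g_n^\star)=\inf_{x\in X}g(x)$ and $\mathcal S\dfn\argmin_{x\in X}g(x)=\{x\in X\mid g_i(x)=g_i^\star,\ i\in\N_{[1,n]}\}$. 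The step I expect to be the main obstacle — and the one where level-boundedness, closedness of $X$, and the structure of $g$ in the targeted applications (the coordinates $g_i$ depending on mutually independent blocks of the decision variable) are genuinely used — is to establish that $\mathcal S\neq\emptyset$. I take this for granted in what follows; everything downstream is short.

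Granting $\mathcal S\neq\emptyset$, fix $x^\star\in\mathcal S$, so $g(x^\star)=g^\star$. For every $x\in X$ we have $g_i(x)\ge g_i^\star$ for all $i$, hence by monotonicity (A2) $\rho[g(x)]\ge\rho[g^\star]$, and taking the infimum over $x\in X$ yields $\inf_{x\in X}\rho[g(x)]\ge\rho[g^\star]=\rho\big[\inf_{x\in X}g(x)\big]$. The reverse inequality is immediate because $x^\star$ is a feasible point: $\rho\big[\inf_{x\in X}g(x)\big]=\rho[g(x^\star)]\ge\inf_{x\in X}\rho[g(x)]$. This proves \eqref{eq:interchangeability_infima}; it also shows that $x^\star$ minimizes $\rho\circ g$ over $X$, and since $x^\star\in\mathcal S$ was arbitrary, $\mathcal S\subseteq\argmin_{x\in X}\rho[g(x)]$, which is \eqref{eq:argmin_subset}. (Only monotonicity is needed for these two items.)

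For \eqref{eq:argmin_equality} it remains to prove the reverse inclusion $\argmin_{x\in X}\rho[g(x)]\subseteq\mathcal S$. Let $\hat x\in\argmin_{x\in X}\rho[g(x)]$ (nonempty by \eqref{eq:argmin_subset} and $\mathcal S\neq\emptyset$); then $\rho[g(\hat x)]=\rho[g^\star]$ by \eqref{eq:interchangeability_infima}, while $g(\hat x)\ge g^\star$ entrywise. If $\rho$ is strictly monotone (A5): were $g(\hat x)\neq g^\star$, we would have $g_k(\hat x)>g_k^\star$ for some $k$, hence $\prob[g^\star<g(\hat x)]\ge\pi_k>0$, and A5 would force $\rho[g^\star]<\rho[g(\hat x)]$, contradicting the previous line; so $g(\hat x)=g^\star$, i.e., $\hat x\in\mathcal S$. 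If instead $\rho\circ g$ is strictly convex over $X$ (which presupposes $X$ convex): by \eqref{eq:argmin_subset} the point $x^\star\in\mathcal S$ already minimizes the strictly convex function $\rho\circ g$ on the convex set $X$, so it is its unique minimizer, i.e., $\argmin_{x\in X}\rho[g(x)]=\{x^\star\}\subseteq\mathcal S$. In either case, combined with \eqref{eq:argmin_subset}, this yields \eqref{eq:argmin_equality}.
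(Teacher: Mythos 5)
Your proof is essentially the paper's argument with one structural substitution, and---importantly---you have put your finger on exactly the point where both arguments are fragile. The paper works with the sets of \emph{common} $\epsilon$-minimizers, $\{x\in X\mid g_i(x)\le g_i^\star+\epsilon,\ i\in\N_{[1,n]}\}$, asserts they are nonempty ``by the definition of infimum,'' derives $\rho[g^\star]\le\inf_{x\in X}\rho[g(x)]$ from monotonicity exactly as you do, and obtains the reverse inequality by sending $\epsilon\downarrow 0$ along points in these sets and invoking continuity of $\rho$ (a finite convex function on $\Re^n$). You instead posit a common \emph{exact} minimizer $x^\star$, which turns the reverse inequality into a one-line evaluation and lets you dispense with the continuity argument altogether. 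Under the stated hypotheses the two assumptions are equivalent: those $\epsilon$-sets are compact (lsc, level-bounded over closed $X$) and nested, so they are all nonempty precisely when their intersection $\argmin_{x\in X}g(x)$ is nonempty. Hence you lose no generality, and your treatment of \eqref{eq:argmin_subset} and of both cases of \eqref{eq:argmin_equality} coincides with the paper's (your use of A5 via $\prob[g^\star<g(\hat x)]\ge\pi_k>0$ is in fact more precise than the paper's bare ``$g(x^\star)<g(\bar x)$'').

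What must be said plainly is that the step you ``take for granted'' is a genuine gap---but it is the \emph{same} gap the paper glosses over. The definition of infimum does \emph{not} make the common $\epsilon$-minimizer sets nonempty when $n>1$: take $X=[0,1]$, $g_1(x)=x^2$, $g_2(x)=(1-x)^2$ and $\rho=\max$ (a coherent risk measure); then $\rho\big[\inf_{x\in X}g(x)\big]=0$ while $\inf_{x\in X}\rho[g(x)]=\nicefrac{1}{4}$, so \eqref{eq:interchangeability_infima} fails. The theorem holds only under the additional, unstated structural assumption you identified: that the components $g_i$ depend on disjoint blocks of $x$ (as they do on the scenario tree, where each node carries its own decision variables), so the componentwise minimizations decouple and a common (approximate) minimizer exists. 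Making that hypothesis explicit would repair both your proof and the paper's.
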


The \textit{epigraph} of a risk measure \(\rho:\Re^n\to\Re\) is the set 
\(
	\epi \rho 
{}={}
	\{
	    (Y,\gamma)\in\Re^{n+1} 
	{}\mid{}
	    \rho[{}Y{}] {}\leq{} \gamma
	\}
\). 
When \(\rho\) is a coherent risk measure given by \eqref{eq:conic-risk-dual}, 
its epigraph is the set 
\begin{equation}
 \epi \rho = 	\left\{
			(Y,\gamma)\in\Re^{n{+}1} 
	      {}\left|{}
			\begin{array}{l}
			 \exists y \succcurlyeq_{\K^*} 0, E^{\tttop}y{=}Y,
			 \\
			 F^{\tttop}y=0, y^{\tttop}b \leq \gamma
			\end{array}			
	      {}\right.{}
		\right\}.
\end{equation}
Then, for example, stage-wise risk constraints \eqref{eq:stage-wise-risk-constraints}
are equivalent to \(r_t(\inf_{G_{j,t} \leq \eta_{j,t+1}}\eta_{j,t+1}) \leq 0\) for a 
random variable \(\eta_{t+1}\in\Re^{|\nodes(t+1)|}\).
Using Thm. \ref{thm:interchangeability}, we have that the risk constraints \eqref{eq:stage-wise-risk-constraints} 
are equivalent to the existence of \(\eta_{j,t+1}\) such that 
\(G_{j,t} \leq \eta_{j,t+1}\) and \((\eta_{t+1}, 0) \in \epi r_t\).

We shall now derive the epigraph of nested risk measures. To that end, we first
define the epigraph of a conditional risk mapping
\(
\epi \crm{t} 
{}={}
	\{
	    (Y_{t+1},Y_{t})\in\Re^{|\nodes(t+1)|+|\nodes(t)|} 
	{}\mid{}
	    \crm{t}[{}Y_{t+1}{}] 
	    {}\leq{} 
	    Y_{t}
	\}
\)
which is the Cartesian product of the epigraphs of its constituent risk measures
\begin{equation*}
	\epi \crm{t} 
 {}={} 
	\textstyle\prod_{i\in\nodes(t)} \epi\rho^i
\end{equation*}

% -----------------------------------------------------------------------------
% - Proposition: Epigraph of conditional risk mappings (coherent)             -
% - This will be used to derive computationally tractable formulations        -
% -----------------------------------------------------------------------------
\begin{proposition}[Nested risk epigraph]
\label{prop:epi-nested}
Let \((\crm{0},\ldots,\crm{t})\) be a sequence of coherent conditional risk 
mappings. Let \(\bar{\rho}_t\) be the corresponding nested risk measure. Its 
epigraph is
\begin{equation*}
    \epi \bar{\rho}_t 
{\,}{=}{\!}
    \left\{\hspace{-0.4em}
      \begin{array}{l}
      (Y_{t+1}, Y_0) \in \Re^{|\nodes(t+1)|+1}
	  {}\mid{} \exists (Y_j)_{j\in\N_{[1,t]}},
      \\
      Y_j\in\Re^{\nodes(j)},
      (Y_{j+1}, Y_{j}){\in}\epi \crm{j},\, j\in\N_{[0,t]}
      \end{array}
      \hspace{-0.3em}
    \right\}
\end{equation*}
\begin{proof}
The proof is given in the appendix.
\end{proof}
\end{proposition}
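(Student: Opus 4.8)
The plan is to exploit the compositional structure of the nested risk measure. By construction $\bar{\rho}_t[Y_{t+1}] = \crm{0}[\crm{1}[\cdots \crm{t}[Y_{t+1}]\cdots]]$, i.e. $\bar{\rho}_t = \crm{0}\circ\crm{1}\circ\cdots\circ\crm{t}$ (the ``$0+$'' summands in the definition of $\varrho_t$ just add the zero vector and are absorbed). Two ingredients are needed. First, the factorisation $\epi\crm{j} = \prod_{i\in\nodes(j)}\epi\rho^i$ stated just above the proposition, which says precisely that $(Y_{j+1},Y_j)\in\epi\crm{j}$ is equivalent to the componentwise inequality $\crm{j}[Y_{j+1}]\leq Y_j$ over $\nodes(j)$. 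Second, monotonicity (axiom A2, which every coherent $\rho^i$ satisfies): each $\rho^i$ is monotone, hence each $\crm{j}$ is monotone acting node-by-node, hence any composition $\crm{j}\circ\crm{j+1}\circ\cdots\circ\crm{t}$ is monotone. Coherence of the $\rho^i$ also guarantees that $\bar{\rho}_t$ is finite-valued, so $\epi\bar{\rho}_t$ is genuinely a subset of $\Re^{|\nodes(t+1)|+1}$.

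I would then prove the two inclusions. For ``$\supseteq$'': given a chain $(Y_j)_{j\in\N_{[1,t]}}$ with $\crm{j}[Y_{j+1}]\leq Y_j$ for every $j\in\N_{[0,t]}$, apply the mappings successively and chain the monotonicity inequalities: from $\crm{t}[Y_{t+1}]\leq Y_t$ and monotonicity of $\crm{t-1}$ we get $\crm{t-1}[\crm{t}[Y_{t+1}]]\leq\crm{t-1}[Y_t]\leq Y_{t-1}$, and iterating down to $j=0$ yields $\bar{\rho}_t[Y_{t+1}]=\crm{0}[\crm{1}[\cdots\crm{t}[Y_{t+1}]]]\leq Y_0$, i.e. $(Y_{t+1},Y_0)\in\epi\bar{\rho}_t$. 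For ``$\subseteq$'': given $(Y_{t+1},Y_0)\in\epi\bar{\rho}_t$, define the intermediate variables as the partial compositions, $Y_j \dfn \crm{j}[\crm{j+1}[\cdots\crm{t}[Y_{t+1}]]]$ for $j\in\N_{[1,t]}$ (so $Y_t=\crm{t}[Y_{t+1}]$); then $(Y_{j+1},Y_j)\in\epi\crm{j}$ holds with equality for each $j\in\N_{[1,t]}$, and for $j=0$ we have $\crm{0}[Y_1]=\bar{\rho}_t[Y_{t+1}]\leq Y_0$, so $(Y_1,Y_0)\in\epi\crm{0}$. Hence the chain $(Y_j)_{j\in\N_{[1,t]}}$ certifies membership in the right-hand set.

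Equivalently, the whole argument can be packaged as an induction on $t$ via the identity $\bar{\rho}_t = \bar{\rho}_{t-1}\circ\crm{t}$: the base case $\bar{\rho}_0 = \crm{0}$ is immediate from the definition of $\epi\crm{0}$, and the inductive step is exactly the monotonicity chaining above applied once to peel off $\crm{t}$ (for ``$\supseteq$'') or to set $Y_t=\crm{t}[Y_{t+1}]$ and invoke the induction hypothesis on $(Y_t,Y_0)\in\epi\bar{\rho}_{t-1}$ (for ``$\subseteq$'').

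I do not expect a genuine obstacle: the result is a monotonicity-chaining argument. The only points requiring care are the bookkeeping of stage indices, and making explicit that ``$(\cdot,\cdot)\in\epi\crm{j}$'' unwinds to a componentwise inequality over $\nodes(j)$ so that node-by-node monotonicity of the $\rho^i$ may legitimately be applied before composing with the next mapping; I would state this reduction once at the outset so the two inclusions read cleanly.
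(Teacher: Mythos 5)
Your proof is correct, but it follows a genuinely different route from the paper's. The paper proves the proposition by invoking Theorem~\ref{thm:interchangeability}: it writes the innermost value as \(\crm{t}[Y_{t+1}] = \inf\{Y_t \mid (Y_{t+1},Y_t)\in\epi\crm{t}\}\), pulls that infimum out through the outer mappings via \eqref{eq:interchangeability_infima} to get \(\bar{\rho}_t[Y_{t+1}] = \inf_{(Y_{t+1},Y_t)\in\epi\crm{t}} \bar{\rho}_{t-1}[Y_t]\), and recurses. You instead prove the two set inclusions directly, using only monotonicity (A2) of the constituent \(\rho^i\) together with the product structure \(\epi\crm{j}=\prod_{i\in\nodes(j)}\epi\rho^i\): chaining \(\crm{j}[Y_{j+1}]\leq Y_j\) downward for ``\(\supseteq\)'', and exhibiting the partial compositions as explicit certificates for ``\(\subseteq\)''. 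The two arguments are close at heart --- the interchangeability step is itself a monotonicity argument --- but yours is more elementary and self-contained: it bypasses the lsc/level-boundedness hypotheses of Theorem~\ref{thm:interchangeability} (which, when the theorem is applied as in the paper to the identity map over the unbounded set \(\{Y_t \mid \crm{t}[Y_{t+1}]\leq Y_t\}\), are not literally satisfied, even though the relevant infimum is trivially attained at \(Y_t=\crm{t}[Y_{t+1}]\)), and your ``\(\subseteq\)'' direction is constructive, which is useful when recovering the auxiliary epigraphical variables from a solution. What the paper's route buys is brevity and a formula for \(\bar{\rho}_t\) as an infimum over the chained epigraphs, which is exactly the form reused in the epigraphical relaxation leading to \eqref{eq:problem-simplified}.
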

Using Prop. \ref{prop:epi-nested}, we may write 
\eqref{eq:nested-risk-constraint} in the form \((G_{j,t}, 0)\in\epi \bar{r}_{t}\).
Risk constraints, both stage-wise and nested, can be cast as conic constraints. 
%
%
%
% -----------------------------------------------------------------------------
% - Problem formulation using the interchangeability principle                -
% -----------------------------------------------------------------------------
By virtue of the interchangeability property in Theorem \ref{thm:interchangeability},
problem \eqref{eq:risk-problem} is written as 
\begin{equation*}
 \minimize_{u_0,u_1,\ldots, u_{N-1}}{\,}
 \crm{0}[Z_0 {\,}{+}{\,} \crm{1}[Z_1 {\,}{+}{\,} {\ldots} {\,}{+}{\,} \crm{N-1}[Z_{N-1}{\,}{+}{\,}Z_N]],
\end{equation*}
subject to \eqref{eq:dynamics-constraints}--\eqref{eq:risk-constraints-terminal}.
Similarly, this is equivalent to 
\begin{equation*}
 \minimize_{
    \substack{
	u_0,\ldots, u_{N-1},\,Z_N\leq s_N
	\\
	Z_t\leq \tau_{t+1},\, t\in\N_{[0,N-1]}
      }
    }
 {\,}
 \crm{0}[\tau_1 {\,}{+}{\,} \crm{1}[\tau_2 {\,}{+}{\,} {\ldots} {\,}{+}{\,} \crm{N-1}[\tau_{N}{\,}{+}{\,}s_N]],
\end{equation*}
subject to \eqref{eq:dynamics-constraints}--\eqref{eq:risk-constraints-terminal}, where 
\(\tau_t\in\Re^{|\nodes(t)|}\) and \(s_N\in\Re^{|\nodes(N)|}\).
Starting by epigraphically relaxing the innermost term, \(\crm{N-1}[\tau_N+s_N]
=\inf\{s_{N-1}\mid (\tau_N+s_N, s_{N-1})\in\epi\crm{N-1}\}\), proceeding backwards,
employing Thm. \ref{thm:interchangeability} and using the dual conic representation 
of risk measures, we obtain the following formulation
% -----------------------------------------------------------------------------
% - Tractable formulation                                                     -
% -----------------------------------------------------------------------------
\begin{subequations}\label{eq:problem-simplified}
\begin{align}
  &\minimize_{
	      \substack{
		u_0,\ldots,u_{N-1}
		\\
		s_0,\ldots,s_{N},
		\tau_0,\ldots,\tau_{N}
	      }
	    } {} 
	  s_{0}
\\
&  \stt{}\,{} 
	x_0 {}={} x
	\text{ and }
	x_{t+1} 
      {}={} 
	f(x_t, u_t, w_t), 	
\\   
& \phantom{\stt{}\,{}}
	(\tau_{t+1}+s_{t+1}, s_{t})\in\epi \crm{t},
	\label{eq:constraints-epi}
\\
&\phantom{\stt{}\,{}}
	Z_N \leq s_N, Z_t \leq \tau_{t+1}, \text{ for } t\in\N_{[0,N-1]},
	\label{eq:constraints-Z-tau}
\end{align}
subject to additional risk constraints in the form \eqref{eq:risk-constraints-stage}
and \eqref{eq:risk-constraints-terminal}.
\end{subequations}

% -----------------------------------------------------------------------------
% BREAK DOWN of the multistage cost
% -----------------------------------------------------------------------------
In particular, if each \(\rho^i\) is a conic risk measure which is described by 
the tuple \(E^i, F^i, b^i, \K^i\) then the above optimization problem boils 
down to
\begin{subequations}\label{eq:problem-breakdown}\label{eq:problem-simplified}
\begin{align}
  &\minimize{}
	  s^{0}
\\
&  \stt{}\,{} 
	x^0 {}={} x
	\text{ and }
	x^{i_+}
      {}={} 
	f(x^i, u^i, w^{i_+}), 	
\\   
& \phantom{\stt{}\,{}}
	y^i \succcurlyeq_{(\K^i)^*} 0, (E^i)^{\tttop}y^i = \tau^{[i]} + s^{[i]}
	\label{eq:problem-breakdown:Ey}
\\
& \phantom{\stt{}\,{}}
	(F^i)^{\tttop}y^i = 0, (y^i)^{\tttop}b^i \leq s^i
\\
& \phantom{\stt{}\,{}}
	\ell_t(x^{i}, u^{i}, w^{i_+}) \leq \tau^{i_+}, 
	\ell_N(x^{i'}) \leq s^{i'},
\end{align}
for \(i' \in \nodes(N)\), \(i\in\nodes(t)\), \(t\in\N_{[0,N-1]}\) 
and \(i_+\in\child(i)\). In \eqref{eq:problem-breakdown:Ey} we 
denote \(\tau^{[i]} = (\tau^{i_+})_{i_+\in\child(i)}\).

% -----------------------------------------------------------------------------
% BREAK DOWN of stage-wise risk constraints
% -----------------------------------------------------------------------------
Suppose that the  problem is subject to stage-wise risk constraints of the form 
\eqref{eq:stage-wise-risk-constraints} at stage \(t\) with a conic risk measure 
\(r_t\) described by the tuple 
\(
	(\bar{E}_t, \bar{F}_t, \bar{b}_t, \bar{\K}_t)
\). For notational convenience, we drop the index \(j\).
\begin{align}
 \bar{y}_{t} \succcurlyeq_{\bar{\K}_t^*} 0, 
 \ {}
 \bar{E}_t^{\tttop} \bar{y}_{t} = \eta_{t},
 \ {}
 \bar{F}_t^{\tttop}\bar{y}_{t} = 0,
 \\
 \bar{y}_{t}^{\tttop} \bar{b}_t \leq 0,
 \ {}
 \phi_{t}(x^i, u^i, w^{i_+}) \leq \eta_{t}^{i_+}.
\end{align}
for \(i\in\nodes(t)\), \(i_+\in\child(i)\).
We have here introduced the additional variables \(\eta_{t}\in
\Re^{|\nodes(t+1)|}\) and \(\bar{y}_{t}\).

% -----------------------------------------------------------------------------
% BREAK DOWN of stage-wise risk constraints
% -----------------------------------------------------------------------------
Similarly, suppose that the problem is subject to multistage nested risk 
constraints at stage \(t\) of the form \eqref{eq:nested-risk-constraint} where
the multistage risk is given by conic risk measures \(r^i\) described by the 
tuples \((\tilde{E}^i, \tilde{F}^i, \tilde{b}^i, \tilde{\K}^i)\). Then, 
\eqref{eq:nested-risk-constraint} leads to the following constraints
\begin{align}
	\tilde{y}_{t}^i  {}\succcurlyeq_{(\tilde{\K}^i)^*}{} 0,  
    \ {}
	(\tilde{E}^i)^\tttop \tilde{y}_{t}^i {}={} \xi_{t}^{[i]}, 
    \ {}
	(\tilde{F}^i)^\tttop \tilde{y}_{t}^i {}={} 0,
\\
	\phi_{t}(x^{i'}, u^{i'}, w^{i'_+}) \leq \xi_{t}^{i'_+},
    \ {}
	(\tilde{b}^i)^\tttop \tilde{y}_{t}^i {}\leq{} \xi_{t}^i, 
    \ {}
	\xi_{t}^0 {}={} 0,
\end{align}
\end{subequations}
for \(i'{\in}\nodes(t)\), \(i'_+{\in} \child(i')\), \(i{\in} \nodes(t')\), \(t'{\in}\N_{[1,t]}\).

In all cases, the number of decision variables and constraints increases 
linearly with the total number of nodes. Although nested risk constraints 
are translated to more constraints than their stage-wise counterparts, 
the associated complexity is of the same order of magnitude (see 
Section \ref{sec:example} for computation times).

When the system dynamics is linear (or affine) and functions \(\ell_t\) and 
\(\phi_t\) are convex in \(x\) and \(u\), then 
\eqref{eq:problem-simplified} is a convex conic problem which 
can be solved very efficiently with solvers such as MOSEK \cite{mosek},
SuperSCS \cite{superscs} and more.

Problems~\eqref{eq:risk-problem} and~\eqref{eq:problem-simplified} are equivalent 
in the sense that the optimal values of the objective function at the solution 
are the same. If all involved risk measures are strictly monotone, then the 
respective sets of minimizers are equal.

An important property that allows to establish a link between \eqref{eq:risk-problem} 
and~\eqref{eq:problem-simplified} is that of \textit{time consistency} of a policy
\((u_0,\ldots, u_{N-1})\); a policy is called time consistent if for every 
\(t=1,\ldots, N-1\), the tail \((u_t,\ldots, u_{N-1})\) is optimal conditional on
\((x_0, \ldots, x_{t})\) \cite{shapiro2014lectures}. Clearly, all solutions of 
\eqref{eq:risk-problem} are time consistent. According to \cite[Thm.~2]{shapich16},
all time consistent solutions of \eqref{eq:problem-simplified} are optimal for 
\eqref{eq:risk-problem}.

In control applications on Markovian switching systems, such as \cite{HeSoPaBe_RAMPC}, 
problem formulations akin to \eqref{eq:problem-simplified} are employed in a receding
horizon fashion: a multistage risk-averse problem is solved at each time instant
and the first control action is applied to the dynamical system.
The fact that not all policies are time consistent does not compromise the stability 
properties of the closed loop.

%\input{text/risk_constraints_explicit.tex}

% ---- implementation details
% comment out in the final version
%\input{text/implementation.tex}

% ----- examples/simulations 
%!TEX root = ./main.tex
\section{Illustrative example}\label{sec:example}

Suppose \((w_t)_t\) is governed by a stopped Markov process, that is 
\(w_t {}={} W_{\min(t, t_0)}\), where \((W_t)_t\) is a Markov process 
with \(m=4\) modes.  Suppose the system evolves as a Markov jump linear 
system driven by \((w_t)_t\), that is, \(f(x,u,w) = A_w x + B_w u\),
the stage cost is given by \(\ell_t(x,u,w) = x^{\tttop}Q_{w}x + u^{\tttop}R_w u\),
the prediction horizon is \(N=7\). The system dimensions are \(n_x = 2\) and \(n_u = 1\).
Matrices \(A_w\), \(B_w\), \(Q_w\) and \(R_w\) were selected randomly.
The input constraints \(-10 \leq u_t \leq 10\) are 
imposed on the control actions and suppose, for now, that no risk constraints
are imposed.

% As the risk parameter \(\alpha\) increases from \(0\) to \(1\), 
% both \(\EVARa\) and \(\AVARa\) interpolate between the robust and 
% stochastic formulations, as illustrated in~\cref{fig:optimal_avar}.
% 
% \begin{figure}[!htb]
%  \centering
%  \includetikz[width=0.9\linewidth]{cost_vs_alpha}
%  \caption{Optimal cost, \(V^\star\), using \(\AVARa\) and \(\EVARa\) in the multistage 
%           formulation. The figure shows the dependence of \(V^\star\) on the parameter $\alpha \in [0,1]$
%           in absence of risk constraints.}
%  \label{fig:optimal_avar}
% \end{figure}

Consider the cumulative probability distribution of the total cost
\(\sum_{t=0}^{N-1}\ell_t(x_t, u_t, w_t) + \ell_N(x_N)\) 
in~\cref{fig:cost_cumulative}. The worst-case cost
is minimal for \(\alpha=0\) at the expense of a higher cost when moving away from 
the extremes. By contrast, for \(\alpha = 1\), the expected cost is minimal, 
yet high costs may occur with low probability. Intermediate values of \( \alpha \)
result in a trade-off between the two, effectively determining the extent
to which the right tail of the distribution of the cost is compressed. 

\begin{figure}[!htb]
 \centering
 \includegraphics[width=0.9\linewidth]{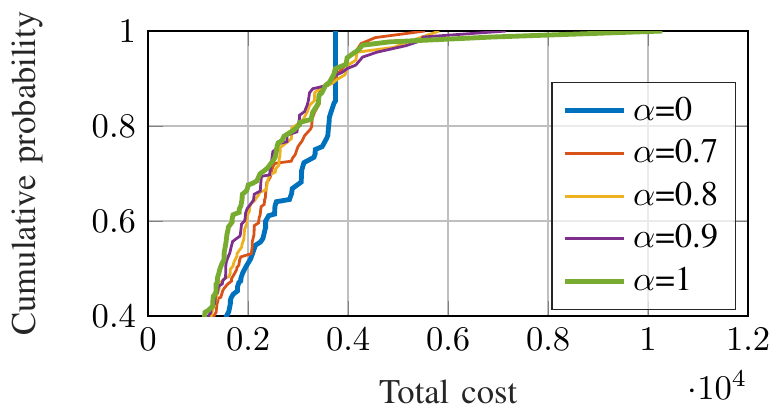}
 \caption{Cumulative probability distribution of the total cost after solving the
 risk-averse optimal control problem for different values of $\alpha$ in \( \AVARa\)
 in absense of risk constraints.}
 \label{fig:cost_cumulative}
\end{figure}

Next, consider the stage constraints function \(\phi_t(x,u,w) = \|x\|^2 - c\) with \(c=0.5\).
We use \(\AVAR_{0.5}\) in the cost and impose the stage-wise risk constraint 
\(\AVARa(\phi_t(x_t, u_t, w_t)) \leq 0\) at all stages \(t\in\N_{[N-4, N-1]}\). 
For the robust case ($\alpha = 0$), there was no feasible solution.
As $\alpha$ increases, \cref{fig:constraints_avar} shows that constraint 
violations occur in larger fractions of the realisations. Also note that since 
$\AVARa$ bounds the $(1-\alpha)$-quantile function, it is guaranteed 
that \(\prob[\|x_t\| \leq c] \geq 1-\alpha\).

\begin{figure}[!htb]
 \centering
 \includegraphics[width=0.9\linewidth]{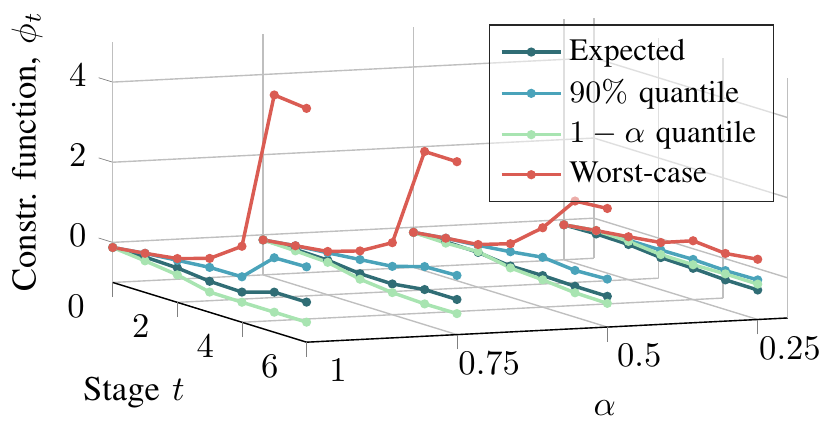}
 \caption{The constraint function \(\phi_t\) over time for a risk-averse optimal control problem
 with different values of \(\alpha\) in the average value-at-risk.}
 \label{fig:constraints_avar}
\end{figure}

% \cref{fig:time_vs_scenarios}
\cref{fig:timings-evar-vs-avar} shows the complexity of the optimization
problem with respect to the number of scenarios. We fixed \(N=12\) and controlled 
the number of scenarios and nodes with the \textit{branching horizon} \(t_0\).
Although nested risk constraints increase the 
problem size, the asymptotic complexity remains linear in the number of nodes.
Moreover, for both constraint types, problems using \(\AVAR\) with up to 200 scenarios
can still be solved in well under a second.
Since MOSEK V. 8~\cite{mosek} does not support exponential constraints, it cannot be   
used to solve problems involving \(\EVAR\). For that reason, we resort to SuperSCS~\cite{superscs}. As shown in 
\cref{fig:timings-evar-vs-avar}, \(\EVAR\)-based problems are solved at a significantly higher runtime. 
% \begin{figure}[!htb]
%  \centering
%  \includetikz[width=0.8\linewidth]{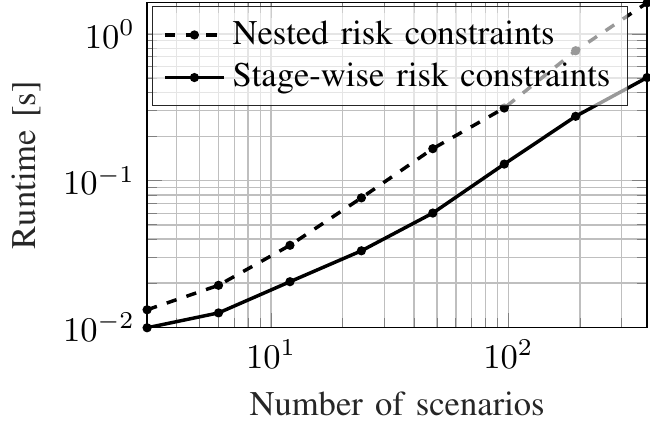}\vspace{-1em}
%  \caption{Runtime using \(\AVAR\) versus the number of scenarios in the
%  tree (computed on an Intel Core i7-7700K CPU at 4.20GHz, using MOSEK~\cite{mosek}).}
%  \label{fig:time_vs_scenarios}
% \end{figure}

\begin{figure}[!htb]    
 \centering 
 \begin{minipage}{0.49\linewidth}   
 \centering 
 \includegraphics[width=\linewidth]{runtime_vs_scenarios_loglog}
 \end{minipage} \hfill  
 \begin{minipage}{0.49\linewidth}   
 \centering 
 \includegraphics[width=\linewidth]{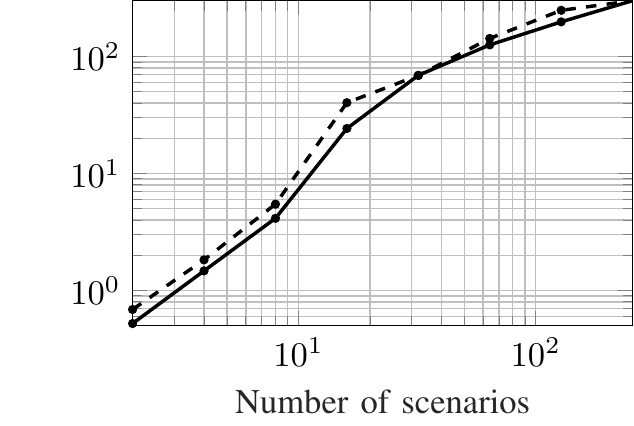}  
 \end{minipage}
\caption{Solver time versus the number of scenarios in the tree 
(computed on an Intel Core i7-7700K CPU at 4.20GHz). \textit{(left)} \(\AVAR\) (MOSEK~\cite{mosek}); \textit{(right)} \(\EVAR\) (SuperSCS~\cite{superscs}).} 
\label{fig:timings-evar-vs-avar}    
\end{figure} 

% ---- Conclusions
% :: We plan to investigate how to speed up the calculations
%    of risk-constrained problems by taking advantage of the 
%    problem structure.
% !TEX root = ../main.tex
% \vspace{-1.5em}
\section{Conclusions}
%------------------------------------------------------------------------------
%-- Conclusions                                                               -
%------------------------------------------------------------------------------
We presented a decomposition methodology for nested 
conditional risk mappings in multistage risk-averse optimal control problems. 
In the common case where the system dynamics is linear and the state and input 
constraints are convex, the original multistage nested 
problem is cast as a conic problem, which can be solved very 
efficiently and is suitable for real-time embedded applications, provided that 
the tree contains a moderate number of nodes. 
The proposed approach hinges on the convex dual formulation of conic risk 
measures.
Future work will focus on tailored numerical methods to exploit the
tree structure to solve such problems fast, efficiently and using 
parallelization (e.g., on GPUs \cite{SamSopBemPat17b,SamSopBemPat15}).

\bibliographystyle{ieeetr}
\bibliography{main}

% ---- Appendix
\appendix
\begin{proof}[Proof of Theorem~\ref{thm:interchangeability}]
{
Let \(\rho : \Re^n \rightarrow \Re\) be a convex risk measure and 
\(g {}:{} \Re^m \ni x \mapsto  \left( g_1(x), g_2(x), \dots, g_n(x) \right) \in
\Re^n\). 
Define 
\(
  g^{\star} = \inf_{x \in X} g(x)
\); we know that \(\inf_{x \in X} g_i(x)\) are finite because of \cite[Thm. 1.9]{rockafellar2011variational}.
For \(\epsilon > 0\), define 
\[
 \B_{\epsilon}^g = \left\{x \in X \mid g_i(x) \le g_i^{\star} + \epsilon, i = 1,\ldots,n \right\}.
\]
By the definition of infimum, \(\B_{\epsilon}^g\) are nonempty and nested 
(\(\B_{\epsilon'}^g \subseteq \B_{\epsilon}^g\) for \(\epsilon' \le \epsilon\)).
For \(x \in \B_{\epsilon}^g\) we have \(g^{\star} \le g(x) \le g^{\star} + \epsilon\).	
Using the monotonicity property of \(\rho\) (A2) we obtain	
\begin{align}
\label{eq:risk_measures_sandwich}
	\rho[{}g^{\star}{}] 
{}\leq{} 
	\rho[{}g(x){}],
\end{align}
for all \(x \in \B_{\epsilon}^g\).
By taking the infimum on both sides of~\eqref{eq:risk_measures_sandwich} 
we obtain 
\begin{subequations}
 \begin{align}
	\rho[{}g^{\star}{}] 
{}\leq{} 
	\inf_{\epsilon > 0}\inf_{x \in \B_{\epsilon}^g} \rho[{}g(x){}] 
{}={} 
	\inf_{x \in X} \rho[{}g(x){}]. \label{eq:rho_fstar_under}
\end{align}
Conversely, take \(x^{\epsilon}\in\B_{\epsilon}^g\). As \(\epsilon{}\downarrow{}0\),
\(g(x^\epsilon)\to g^\star\) and because \(\rho\) is continuous, 
\(
	\rho[{}g(x^\epsilon){}]
{}\to{}
	\rho[{}g^\star{}]
\).
Since \(\inf_{x\in X} \rho[{}g(x){}] \leq \rho[{}g(x){}]\), 
\begin{equation}
\label{eq:rho_fstar_over}
	\inf_{x\in X} \rho[{}g(x){}]
{}\leq{} 
	\rho[{}g^\star{}]. 
\end{equation}
\end{subequations}
By~\eqref{eq:rho_fstar_under} and \eqref{eq:rho_fstar_over} we have 
established~\eqref{eq:interchangeability_infima}.

Let us assume now that \(\argmin_{x \in X} g(x)\) is a nonempty set. 
For any \(x^{\star} \in \argmin_{x \in X} g(x)\) it holds by definition that
\(g(x^{\star}) = \inf_{x \in X} g(x)\).
Then, by the property established above it holds that 
\(
	\rho[{}g(x^{\star}){}] 
{}={} 
	\inf_{x \in X} \rho[{}g(x){}]
\), therefore, \eqref{eq:argmin_subset} holds true. 

If \(\rho\circ g\) is strictly convex, then
the minimizer is unique, therefore \eqref{eq:argmin_equality}  holds.
Assume that risk measure \(\rho\) is strictly monotone (see Condition A5)
and there exists  \(\bar{x} \in \argmin_{x \in X} \rho \left( g(x) \right)\),
but \(\bar{x} \notin \argmin_{x \in X} g(x)\). Then, \(g(x^\star) < g(\bar{x})\) 
which, by strict monotonicity, implies \(\rho[ {}g(x^\star){} ] < \rho[{}g(\bar{x}){}]\) 
leading to contradiction.
}
\end{proof}
\vspace{1.5em}
\begin{proof}[Proof of Proposition~\ref{prop:epi-nested}]
{
For \(Y_{t+1}\in\Re^{|\nodes(t+1)|}\) and \(Y_0\in\Re\),
we have that \((Y_{t+1}, Y_0)\in\epi \bar{\rho}_t\) if \(\bar{\rho}_t[Y_{t+1}]\leq Y_0\) and
using Theorem~\ref{thm:interchangeability}
\begin{align*}
    \bar{\rho}_t[Y_{t+1}] 
=& 
    \crm{0}[{}\cdots{}\crm{t-1}[\crm{t}[Y_{t+1}]]]
\\
=&
    \crm{0}\Big[{}\cdots{}\crm{t-1}\big[\inf_{\crm{t}[Y_{t+1}]\leq Y_{t}}Y_{t}\big]\Big]
\\
=&
    \inf_{(Y_{t+1}, Y_{t})\in \epi \crm{t}}\crm{0}\big[{}\cdots{}\crm{t-1}[Y_{t}]\big]
\\
=&
    \inf_{(Y_{t+1}, Y_{t})\in \epi \crm{t}} \bar{\rho}_{t-1}[Y_{t}]
\end{align*}
repeating recursively the same procedure, proves Proposition~\ref{prop:epi-nested}.
}
\end{proof}

\end{document}